\theoremstyle{plain}
\newtheorem{theorem}{Theorem}[section]
\newtheorem{lemma}[theorem]{Lemma}
\newtheorem{corollary}[theorem]{Corollary}
\newtheorem{definition}[theorem]{Definition}
\numberwithin{equation}{section} \thispagestyle{empty}
\begin{document}

\title{  On $k$-Connected $\Gamma$-Extensions of Binary Matroids}
%\date{}
\author{Y. M. Borse$^1$ and Ganesh Mundhe$^2$ }
\email{\emph{$^1$ymborse11@gmail.com; $^2$ganumundhe@gmail.com}}
\address{\rm $^1$ Department of Mathematics, Savitribai Phule Pune University, Pune-411007, India.}
\address{\rm $^2$ Army Institute of Technology, Pune-411015, India.}\thanks{ This research is supported by DST-SERB, Government of India under the project SR/S4/MS:750/12}
 %\subjclass{Primary 16S36; 16W60, Secondary 16W10} %
 \maketitle
 \begin{abstract} Slater introduced the  point-addition operation on graphs to classify 4-connected graphs. The $\Gamma$-extension operation on binary matroids is a  generalization of  the point-addition operation. In this paper, we obtain necessary and sufficient conditions to preserve $k$-connectedness of a binary matroid under the $\Gamma$-extension operation. We also obtain a necessary and sufficient condition to get a connected  matroid  from a disconnected binary matroid using the  $\Gamma$-extension operation.
 \end{abstract}
\vskip.2cm\noindent
{\bf Keywords:} binary matroid, splitting, $k$-connected, $\Gamma$-extension
\vskip.2cm\noindent
{\bf Mathematics Subject Classification:} 05B35
   \section{Introduction}
\noindent
 We refer to \cite {ox} for standard terminology in graphs and matroids. The matroids considered here are loopless and coloopless. Slater \cite{sl} introduced the point-addition operation on graphs and used it to classify $4$-connected graphs. Azanchiler \cite{a1} extended this operation to  binary matroids as follows: 
 \begin{definition} \cite{a1}  Let $M$ be a binary matroid with ground set $S$ and standard matrix representation $A$ over $GF(2).$ Let $X = \{x_1, x_2, \dots, x_m\} \subset S$ be an independent set in $M$ and let $\Gamma = \{\gamma_{1}, \gamma_{2}, \dots, \gamma_{m}\}$ be a set such that $ S \cap \Gamma = \phi.$    Suppose  $A'$ is  the matrix obtained from the matrix $A$ by adjoining $m$ columns  labeled by $\gamma_{1}, \gamma_{2},...,\gamma_{m}$ such that the column labeled by $\gamma_{i}$
is same as the column labeled by $x_i$ for $ i =1, 2, \dots, m.$ Let $A^X$ be the matrix obtained by adjoining one extra row to  $A'$ which has entry 1 in the column labeled by $\gamma_{i}$ for $i= 1, 2 \dots, m$  and zero elsewhere. The  vector matroid of the matrix $A^{X},$ denoted by $M^{X},$  is called as the $\Gamma$-extension of $M$ and the transition from $M$ to $M^{X}$ is called as $\Gamma$-extension operation on $M.$ 
\end{definition}

An example given at the end of the paper illustrates the definition.
Note that the ground set of the matroid $M^{X}$ is $S\cup \Gamma$  and $M^X\setminus \Gamma  = M.$ Therefore $M^X$ is an extension of $M.$   The $\Gamma$-extension operation is related to the {\it splitting operation}  on binary matroids, which is defined by Shikare et al. \cite{saw}, as follows: 
\begin{definition} \cite{saw}  Let $M$ be a binary matroid with standard matrix representation $A$ over  $GF(2)$ and let $Y$ be a non-empty set of elements of $M.$  Let $A_Y$ be the matrix  obtained by adjoining one extra row to the matrix $A$ whose entries are 1 in the columns labeled by the elements of the set $Y$ and zero otherwise.  The vector matroid of the matrix $A_Y,$ denoted by $M_Y,$  is called as the splitting matroid of  $M$ with respect to $Y,$ and the  transition from $M$ to $M_{Y}$ is called as the {\it splitting operation} with respect to $Y.$  
 \end{definition} 
 Let $M$ be a binary matroid with ground set $S$ and let $X = \{x_1, x_2, \dots, x_m\}$  be an independent set in $M.$ Obtain the extension $M'$ of $M$ with ground set $S \cup \Gamma,$ where $\Gamma = \{\gamma_1, \gamma_2, \dots, \gamma_m\}$ is disjoint from $S,$ such that  $\{x_i, \gamma_i\}$ is a 2-circuit in $M'$ for each $i.$ The matroid $M'_{\Gamma}$ obtained from $M'$ by splitting the set $\Gamma$  is the $\Gamma$-extension matroid $M^X.$ 

The splitting operation with respect to a pair of elements, which is a special case of Definition 1.2, was earlier defined by Raghunathan et al. \cite{rsw}  for binary matroids as an extension of the corresponding graph operation due to Fleischner \cite{f}. 

Whenever we write $M^{X},$ it is assumed that $X$ is a non-empty independent set of  the matroid $M.$ 

Azanchiler \cite{a1} characterized the circuits and the bases of the  $\Gamma$-extension  matroid $M^{X}$ in terms the circuits and bases of $M,$ respectively. Some results on preserving graphicness of  $M$ under the $\Gamma$-extension operation are obtained in \cite{a2}. Borse and Mundhe \cite{bm2} characterized the binary matroids $M$ for which $M^X$ is graphic for any independent set $X$ of $M.$

A {\it $k$-separation} of a matroid $M$  is a partition of its ground set $S$ into two disjoint sets $A$ and $B$ such that  $min~\{\left| A \right|,\left| B \right| \} \geq k$ and $ r(A)+r(B)-r(M)\leq k-1.$  A matroid $M$ is {\it $k$-connected} if it does not have a $(k-1)$-separation. Also,  $M$ is  {\it connected} if it is  2-connected.

In general, the splitting operation does not preserve the connectivity of a given matroid. Borse and Dhotre \cite{bd} provided a sufficient condition to preserve connectedness of a matroid   while Borse \cite{b} gave a sufficient condition to get a  $k$-connected matroid from given the $(k+1)$-connected binary matroid, under the splitting with respect to a pair of elements. Borse and Mundhe \cite{bm1}, and Malwadkar et al. \cite{msd} gave two characterizations for getting a $k$-connected  matroid from the given $(k+1)$-connected binary matroid by splitting with respect to any set of $k$ elements.

%\begin{theorem}\cite{bm1} Let  $ k \geq 2 $  and $M$ be an $(k+1)$-connected binary matroid with $|S| \geq 2k+1 $  and  $X \subset S$ with $|X|=k.$  Then the following statements are equivalent.  \begin{enumerate} \item The  matroid $M_X$ is  $k$-connected. \item  For every cocircuit $Q$ in $M$ intersecting $X,$   $|Q| \geq {\rm max} \{2|Q \cap X|, k+1\}.$  \item For $ A \subset S$ with $|A| = k-1,$ there is a circuit $C$ in $M$  such that  $|C \cap X| $ is odd and $C \cap A = \phi.$
%\end{enumerate}
%\end{theorem}

 The $\Gamma$-extension operation also does not give $k$-connected matroid from the given $k$-connected binary matroid in general. Azanchiler \cite{a1} obtained sufficient conditions to preserve $2$-connectedness and 3-connectedness  of a binary matroid under this operation. 
 
In this paper, we obtain  necessary and sufficient conditions to preserve $k$-connectedness  under  the   $\Gamma$-extension operation for any integer $ k \geq 2.$  We also give necessary and sufficient conditions to get a \textit{connected}  matroid  from a disconnected binary matroid  in terms of the  $\Gamma$-extension operation.

 \section{Proofs}
 \noindent
We  need some lemmas.
\begin{lemma}\cite{a1}\label{1}  Let $M$ be a binary matroid with ground set $S$ and let $X$ be an independent set in $M.$ Suppose $M^X$ is  the  $\Gamma$-extension of $M$ with ground set $ S \cup \Gamma.$  Let $r$ and $r'$ be the rank functions of $M$ and $M^{X},$ respectively.  Then\\
   (i) $\Gamma$ is independent in $M^{X};$\\
	(ii) $r'(A) = r(A) $ if $ A \subset S;$ \\
	(iii) $r'(A) \geq r(S\cap A) + 1 $ if  $ A$ intersects $\Gamma;$  \\
	(iv) $r'(M^{X})=r(M)+1.$\\
\end{lemma}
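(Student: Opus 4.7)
The plan is to read everything directly off the block structure of the matrix $A^X$ used to define $M^X.$ Each column of $A^X$ indexed by $s\in S$ is the column of $s$ in $A$ with a $0$ appended at the bottom, while each column indexed by $\gamma_i\in \Gamma$ is the column of $x_i$ in $A$ with a $1$ appended at the bottom. All four statements then reduce to tracking this extra coordinate over $GF(2).$

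For (i), I would take a vanishing linear combination of the $\Gamma$-columns; discarding the last coordinate gives a dependence among the columns of $X$ in $A,$ which is impossible since $X$ is independent in $M.$ For (ii), the submatrix of $A^X$ on the columns indexed by $A\subset S$ is just the $A$-submatrix of the original matrix with an appended zero row, so its rank equals $r(A).$

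For (iii), I would partition $A$ into $A\cap S$ and $A\cap \Gamma$ and pick any $\gamma_j\in A\cap \Gamma.$ By (ii) the columns indexed by $A\cap S$ have rank $r(S\cap A)$ in $A^X,$ and the column of $\gamma_j$ is not in their span since it has a $1$ in the final row while all those columns have a $0$ there; this yields $r'(A)\geq r(S\cap A)+1.$ For (iv), the upper bound follows from the fact that $A^X$ has $r(M)+1$ rows, while the lower bound is exactly the special case $A=S\cup \Gamma$ of (iii).

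The whole argument is a bookkeeping exercise in linear algebra and I do not anticipate any serious obstacle. The one point that deserves an explicit sentence is the upper bound in (iv), which implicitly uses that a \emph{standard} matrix representation of $M$ has exactly $r(M)$ rows, so that appending the new row gives at most $r(M)+1.$ Apart from this, each part is essentially a one-line observation once the column shapes have been written down.
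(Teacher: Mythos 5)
Your argument is correct and complete; each part does indeed reduce to tracking the appended coordinate, and you rightly flag the only point needing care (that the standard representation has exactly $r(M)$ rows, giving the upper bound in (iv)). The paper itself imports this lemma from Azanchiler's article without proof, so there is no in-text argument to compare against, but your matrix bookkeeping is the standard and essentially only natural derivation.
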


\begin{lemma} \cite{a1} \label{2}
Let $M$ be a binary matroid with ground set $S$ and let $X$ be an independent set in $M.$ Then $ Z \subset S\cup \Gamma$ is a circuit of $M^X$ if and only if one of the following conditions holds:\\
(i) $Z$ is a circuit of $M;$ \\
	(ii) $  Z  = \{x_i,  x_j, \gamma_{i}, \gamma_{j}\}$ for some distinct elements $x_i, x_j$ of $X$ and the corresponding elements $\gamma_i, \gamma_j$ of $\Gamma;$\\
(iii) $ Z = J \cup (D - X_J),$ where $ J \subset \Gamma$ with $ |J|$ even and $D$ is a circuit of $M$ containing the set $ X_J = \{ x_i \in X\colon \gamma_i \in J\}.$ 
	\end{lemma}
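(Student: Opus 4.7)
The plan is to work directly with the matrix $A^X$ from Definition~1.1, since, over $GF(2)$, a set $Z\subseteq S\cup\Gamma$ is a circuit of $M^X$ precisely when the columns of $A^X$ indexed by $Z$ sum to zero and no nonempty proper subset of them does. With $J=Z\cap\Gamma$ and $Z_1=Z\cap S$, the structure of $A^X$ shows that the column sum of $Z$ equals $\bigl(\sum_{s\in Z_1}c_s+\sum_{\gamma_i\in J}c_{x_i},\;|J|\bmod 2\bigr)$, where $c_s$ is the column of $s$ in $A$ and the last coordinate corresponds to the added row. Consequently the column sum of $Z$ vanishes iff $|J|$ is even and the symmetric difference $Z_1\triangle X_J$ is empty or dependent in $M$.

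For the ``if'' direction I would verify each of the three forms against this criterion and then check minimality. Form (i) is immediate from Lemma~\ref{1}(ii), which says the rank on subsets of $S$ is unchanged. For (ii), the four columns of $\{x_i,x_j,\gamma_i,\gamma_j\}$ contribute $2c_{x_i}+2c_{x_j}=0$ in the old rows and $|J|=2$ in the new row; minimality is checked by enumerating proper subsets and invoking the independence of $X\supseteq\{x_i,x_j\}$ (to rule out 2-subsets) together with a new-row parity argument (to rule out 3-subsets). For (iii), the old-row sum telescopes to $\sum_{d\in D}c_d=0$ by the circuit property of $D$, and minimality follows because any proper dependent subset $Y\subsetneq Z$ of $M^X$ would translate, via the correspondence $\gamma_i\leftrightarrow x_i$ on $J$, into a proper cycle of $M$ contained in $D$, contradicting the minimality of $D$ in $M$.

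For the ``only if'' direction, let $Z$ be a circuit of $M^X$ and retain the notation $J,Z_1$. The new-row coordinate forces $|J|$ to be even. If $J=\emptyset$, then $Z\subseteq S$ and Lemma~\ref{1}(ii) identifies $Z$ as a circuit of $M$, yielding form~(i). If $J\ne\emptyset$, I split on whether $Z_1\cap X_J=\emptyset$. In the empty-intersection case, put $D:=Z_1\cup X_J$; then $\sum_{d\in D}c_d=0$, so $D$ is dependent in $M$, and the minimality of $Z$ in $M^X$ is used to upgrade this to $D$ being a single circuit of $M$ containing $X_J$, producing form~(iii). In the nonempty-intersection case, pick $x_i\in Z_1\cap X_J$ and any $\gamma_j\in J\setminus\{\gamma_i\}$; the 4-element subset $\{x_i,x_j,\gamma_i,\gamma_j\}\subseteq Z$ already has zero column sum by the analysis of case~(ii), so the minimality of $Z$ forces $Z=\{x_i,x_j,\gamma_i,\gamma_j\}$, giving form~(ii).

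The main technical obstacle is the ``upgrade'' step in the $Z_1\cap X_J=\emptyset$ subcase of the reverse direction: improving ``$D$ is dependent in $M$'' to ``$D$ is a single circuit of $M$.'' A direct approach fails if $D$ decomposes as disjoint circuits $C_1\sqcup\cdots\sqcup C_k$ of $M$ with each $|C_\ell\cap X_J|$ odd and $k$ even, because no single $C_\ell$ lifts to a proper dependent subset of $Z$ in $M^X$ (its $\gamma$-count would be odd). The argument must therefore pair such $C_\ell$'s so that the combined $X_J$-intersection is even, extracting a strictly smaller cycle of $M^X$ inside $Z$ and contradicting the minimality of $Z$. This parity bookkeeping, together with the standard decomposition of a cycle of a binary matroid into disjoint circuits, is the heart of the proof.
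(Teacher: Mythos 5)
Your forward (``if'') direction is essentially correct, and your reduction of ``the columns of $Z$ sum to zero in $A^X$'' to ``$|J|$ is even and $Z_1\triangle X_J$ sums to zero in $A$'' is the right computation (one small imprecision: what you need there is ``$Z_1\triangle X_J$ is empty or a disjoint union of circuits,'' not merely ``dependent''; a dependent set need not have zero column sum). The paper offers nothing to compare against here --- Lemma \ref{2} is quoted from \cite{a1} without proof --- so the converse must stand on its own, and it does not. First, in your nonempty-intersection subcase you write $\{x_i,x_j,\gamma_i,\gamma_j\}\subseteq Z$, but only $x_i,\gamma_i,\gamma_j$ are known to lie in $Z$; nothing forces $x_j\in Z$, so no violation of minimality of $Z$ is obtained. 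Second, the ``pairing'' you propose for the empty-intersection subcase fails precisely in the critical case $k=2$: if $D=C_1\sqcup C_2$ with both $|C_\ell\cap X_J|$ odd, the two lifted pieces $W_\ell=(C_\ell- X_J)\cup\{\gamma_i\colon x_i\in C_\ell\cap X_J\}$ have union equal to all of $Z$, so no \emph{proper} zero-sum subset is extracted.

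These gaps cannot be repaired, because the classification you are asked to prove is incomplete. Take $M$ to be the direct sum of two triangles $C_1=\{a_1,a_2,a_3\}$ and $C_2=\{b_1,b_2,b_3\}$ and $X=\{a_1,b_1\}$. The set $Z=\{a_2,a_3,b_2,b_3,\gamma_1,\gamma_2\}$ has zero column sum in $A^X$, and it is minimal: a zero-sum subset must contain $0$ or $2$ elements of $\Gamma$; with $0$ it lies in the independent set $\{a_2,a_3,b_2,b_3\}$, and with $2$ it corresponds to a cycle of $M$ inside $C_1\cup C_2$ containing both $a_1$ and $b_1$, which forces all of $C_1\cup C_2$ and hence all of $Z$. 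So $Z$ is a circuit of $M^X$ of none of the forms (i)--(iii), since $C_1\cup C_2$ is not a circuit of $M$. Similarly $\{a_1,\gamma_1,b_2,b_3,\gamma_2\}$ is a five-element circuit meeting both $X_J$ and $\Gamma$, which defeats your nonempty-intersection subcase directly. The correct statement must admit, in addition to (i)--(iii), certain minimal unions built from two circuits of $M$ (this is visible from the known circuit description of splitting matroids applied to $M'_\Gamma$). Fortunately the rest of the paper only ever uses the ``if'' direction of part (ii), which is the part of your argument that is sound.
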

	\begin{lemma}  [\cite{ox}, pp 273] \label{3} Let $M$ be a $k$-connected  matroid with at least $2(k-1)$ elements. Then every circuit and every cocircuit of $M$ contains at least $k$ elements.
\end{lemma}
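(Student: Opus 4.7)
The plan is to argue by contradiction, constructing an explicit $(k-1)$-separation from a hypothetical short circuit. Suppose $C$ is a circuit of $M$ with $|C|\leq k-1$. The key fact I would exploit is the rank identity $r(C)=|C|-1$, which makes $C$ a low-rank subset of the ground set $S$.

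First I would enlarge $C$ to a set $A$ of size exactly $k-1$ by appending arbitrary elements of $S\setminus C$. This enlargement is possible because the hypothesis $|S|\geq 2(k-1)$ together with $|C|\leq k-1$ forces $|S\setminus C|\geq k-1\geq k-1-|C|$. Setting $B=S\setminus A$, I would bound $r(A)$ by the trivial estimate $r(A)\leq r(C)+|A\setminus C|$, obtaining $r(A)\leq (|C|-1)+(k-1-|C|)=k-2$.

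Next I would verify the separation inequalities. Both $|A|=k-1$ and $|B|=|S|-(k-1)\geq k-1$, so the size condition $\min\{|A|,|B|\}\geq k-1$ holds. Since $r(B)\leq r(M)$, one has $r(A)+r(B)-r(M)\leq r(A)\leq k-2=(k-1)-1$. Thus $(A,B)$ is a $(k-1)$-separation of $M$, contradicting its $k$-connectedness. Hence $|C|\geq k$ for every circuit $C$ of $M$.

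For cocircuits I would appeal to duality: $M^{*}$ has the same ground set as $M$ (so the cardinality hypothesis is inherited), is $k$-connected precisely when $M$ is, and its circuits are exactly the cocircuits of $M$. Applying the circuit bound already proved to $M^{*}$ yields the cocircuit bound for $M$. I do not anticipate a serious obstacle; the only delicate point is the bookkeeping in the construction of $A$, namely confirming that $S\setminus C$ contains enough elements to extend $C$ to size $k-1$, which is precisely where the hypothesis $|S|\geq 2(k-1)$ is used and where the extremal case $|C|=k-1$ (no extension needed) should be separated from the strictly smaller cases.
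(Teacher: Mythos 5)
Your proof is correct. The paper does not prove this lemma at all --- it cites it from Oxley --- and your argument (padding a short circuit $C$ to a set $A$ of size $k-1$ with $r(A)\leq k-2$, reading off a $(k-1)$-separation, then dualizing for cocircuits) is essentially the standard textbook proof of that cited result, with the hypothesis $|S|\geq 2(k-1)$ used exactly where it must be, to guarantee $|B|\geq k-1$.
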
 
The next lemma is a consequence of [9, Proposition 2.1.6].
\begin{lemma}\cite{b} \label{4} Let $M$ be a matroid with ground set $S$ and let $ Y \subset S $ such that $ r(M\setminus Y) = r(M) - 1.$ Then $Y$ contains a cocircuit of $M.$ 
\end{lemma}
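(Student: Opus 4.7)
The plan is to produce a cocircuit of $M$ sitting inside $Y$ via the standard hyperplane--cocircuit correspondence. First I would unpack the hypothesis: by definition of deletion, $r(M \setminus Y) = r(S - Y)$, so the assumption reads $r(S - Y) = r(M) - 1$. Since the closure of a set has the same rank as the set itself, $H := \mathrm{cl}(S - Y)$ is a flat of $M$ of rank $r(M) - 1$, which is by definition a hyperplane of $M$.

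Next I would invoke the standard matroid-theoretic fact (Oxley, Proposition 2.1.6) that the complement of a hyperplane of $M$ is a cocircuit of $M$; this yields that $C^{*} := S - H$ is a cocircuit. Finally, since $S - Y \subset H$, taking complements in $S$ gives $C^{*} = S - H \subset Y$, so $Y$ contains the cocircuit $C^{*}$, as required.

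There is essentially no real obstacle: the entire argument amounts to reading off the hyperplane--cocircuit duality once the hypothesis has been translated into the statement that $\mathrm{cl}(S - Y)$ has rank $r(M) - 1$. This is precisely why the authors present the lemma as an immediate consequence of Oxley's Proposition 2.1.6 rather than as something requiring independent work.
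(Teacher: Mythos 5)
Your proof is correct and follows exactly the route the paper indicates: the paper gives no proof of its own, merely noting that the lemma is a consequence of Oxley's Proposition 2.1.6 (the hyperplane--cocircuit correspondence), which is precisely the argument you have written out. The translation of the hypothesis into ``$\mathrm{cl}(S-Y)$ is a hyperplane'' and the containment $S-\mathrm{cl}(S-Y)\subset Y$ are both handled correctly.
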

The following result follows immediately from Lemma \ref{3} and  Lemma \ref{4}. 
	\begin{corollary}  \label{5}
	Let $M$ be a $k$-connected matroid with ground set $S$ such that $|S| \geq 2(k-1).$ Then  $r(M \backslash Y) = r(M)$ for any $Y \subset S$ with $|Y|< k.$ 
	\end{corollary}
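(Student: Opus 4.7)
The plan is to argue by contradiction and use a ``first drop'' argument on the rank when removing elements of $Y$ one at a time.

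Suppose, for contradiction, that there exists a set $Y \subset S$ with $|Y| < k$ such that $r(M \setminus Y) < r(M)$. Enumerate the elements of $Y$ as $y_1, y_2, \ldots, y_t$ with $t = |Y|$, and set $Y_i = \{y_1,\ldots,y_i\}$, $Y_0 = \emptyset$. Since each single deletion decreases the rank by at most $1$, the sequence $r(M), r(M\setminus Y_1), \ldots, r(M\setminus Y_t)$ is non-increasing with consecutive drops of $0$ or $1$. Because the final term is strictly less than $r(M)$, there is a smallest index $i$ at which the rank first drops, so that $r(M \setminus Y_i) = r(M) - 1$.

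Now I would invoke Lemma~\ref{4} on the set $Y_i$: it yields a cocircuit $C^{\ast}$ of $M$ contained in $Y_i$, and in particular $|C^{\ast}| \leq |Y_i| \leq |Y| < k$. On the other hand, since $M$ is $k$-connected with $|S| \geq 2(k-1)$, Lemma~\ref{3} forces $|C^{\ast}| \geq k$. This is the desired contradiction, so $r(M \setminus Y) = r(M)$.

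The argument is short and essentially packages the two cited lemmas; the only mildly delicate point is that Lemma~\ref{4} is stated for sets whose deletion drops the rank by exactly one, so one cannot apply it to $Y$ itself (which could drop the rank by more). The ``first drop'' index $i$ isolates a subset $Y_i \subseteq Y$ to which Lemma~\ref{4} applies directly, and the hypothesis $|Y| < k$ then propagates to $|Y_i| < k$. No further structural properties of $M^X$ or of $\Gamma$-extensions are needed for this corollary; it is purely a matroid-connectivity statement that will be invoked later when analysing $\Gamma$-extensions.
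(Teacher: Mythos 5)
Your proof is correct and follows exactly the route the paper intends: the paper gives no explicit proof, stating only that the corollary ``follows immediately'' from Lemma~\ref{3} and Lemma~\ref{4}, which is precisely the contradiction you assemble. Your ``first drop'' refinement is a sensible (and slightly more careful) way to reconcile the literal hypothesis of Lemma~\ref{4}, which asks for a rank drop of exactly one, with a set $Y$ whose deletion might a priori drop the rank by more.
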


We now give necessary and sufficient conditions to obtain a $k$-connected matroid from the given $k$-connected binary matroid as follows. 
\begin{theorem} \label{6}
Let $k \geq 2$ be an integer and $M$ be a $k$-connected  binary matroid with at least $2(k-1)$ elements and $X$ be  an independent set in $M.$ Then the $\Gamma$-extension matroid $M^{X}$  is $k$-connected if and only if $|X|\geq k$ and $2 \leq k \leq 4$.
\end{theorem}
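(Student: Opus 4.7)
The plan is to separately establish necessity and sufficiency.

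For \textbf{necessity}, suppose $M^X$ is $k$-connected. Since $|S|\ge 2(k-1)$ and $\Gamma\neq\emptyset$, $M^X$ has at least $2(k-1)$ elements, so Lemma \ref{3} applies to $M^X$. By Lemma \ref{1}(iv), $r'(M^X)=r(M)+1$, while $M^X\setminus\Gamma=M$ has rank $r(M)$; hence Lemma \ref{4} places a cocircuit of $M^X$ inside $\Gamma$, and Lemma \ref{3} forces $|X|=|\Gamma|\ge k\ge 2$. Lemma \ref{2}(ii) then produces the $4$-circuit $\{x_1,x_2,\gamma_1,\gamma_2\}$ in $M^X$, and a second application of Lemma \ref{3} forces $k\le 4$.

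For \textbf{sufficiency}, fix $|X|\ge k$ and $2\le k\le 4$, and consider any partition $(A,B)$ of $S\cup\Gamma$ with $|A|,|B|\ge k-1$; the goal is to verify $\lambda(A,B):=r'(A)+r'(B)-r'(M^X)\ge k-1$. Write $A_S=A\cap S$, $A_\Gamma=A\cap\Gamma$, $X_{A_\Gamma}=\{x_i:\gamma_i\in A_\Gamma\}$, and likewise for $B$. Since in $A^X$ each $\gamma_i$-column equals the $x_i$-column plus the last standard basis vector, Lemma \ref{1}(iii) sharpens to the equality $r'(A)=r(A_S\cup X_{A_\Gamma})+\varepsilon_A$, with $\varepsilon_A=1$ if $A_\Gamma\neq\emptyset$ and $\varepsilon_A=0$ otherwise. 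If $A_\Gamma=\emptyset$ (so $\Gamma\subseteq B$), then $\lambda=r(A_S)+r(B_S\cup X)-r(M)$: when $|B_S|\ge k-1$, $k$-connectedness of $M$ on $(A_S,B_S)$ gives $\lambda\ge\lambda_M(A_S,B_S)\ge k-1$; when $|B_S|<k-1$, Corollary \ref{5} gives $r(A_S)=r(M)$, and since $X\subseteq B_S\cup X$ with $X$ independent in $M$, $\lambda\ge r(X)=|X|\ge k$. The case $B_\Gamma=\emptyset$ is symmetric.

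The substantive case is $A_\Gamma\neq\emptyset\neq B_\Gamma$. Put $A'=A_S\cup X_{A_\Gamma}$ and $B'=B_S\cup X_{B_\Gamma}$; a direct computation gives $A'\cup B'=S$ and $A'\cap B'=T$, where $T\subseteq X$ is the set of indices $i$ with $x_i$ and $\gamma_i$ on opposite sides of the partition. Independence of $T$ in $M$ together with submodularity of the rank yields $\lambda(A,B)\ge|T|+1$, which already handles every subcase with $|T|\ge k-2$.

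\textbf{The main obstacle} is the residual range $|T|\le k-3$, which is vacuous for $k=2$ but allows $|T|=0$ for $k=3$ and $|T|\in\{0,1\}$ for $k=4$. For $|T|=0$ the partition respects the pairing $x_i\leftrightarrow\gamma_i$, so $A'=A_S$, $B'=B_S$ and $\lambda=\lambda_M(A_S,B_S)+1$; the bounds $|A_\Gamma|=|A_S\cap X|\le|A_S|$ and its symmetric counterpart force $|A_S|,|B_S|\ge\lceil(k-1)/2\rceil\ge k-2$, and then either $k$-connectedness of $M$ applies directly to $(A_S,B_S)$ (when both sides have size $\ge k-1$), or Corollary \ref{5} drives the rank of the larger side up to $r(M)$ while Lemma \ref{3} (no circuit of $M$ has size $<k$) forces the rank of the smaller side to equal its size, yielding $\lambda\ge k-1$ in both situations. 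For $k=4$ and $|T|=1$ the plan is to pass to the shifted partition $(A_S\setminus T,B_S\cup T)$ of $S$: when both of its parts have size at least $3$, $4$-connectedness of $M$ gives $\lambda_M\ge 3$ and hence $\lambda\ge 4$; otherwise Corollary \ref{5}, combined with the absence of circuits of size $\le 3$ in $M$, disposes of the finitely many small configurations to give $\lambda\ge 3$.
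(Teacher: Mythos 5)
Your necessity argument is correct and is essentially the paper's: a cocircuit of $M^X$ inside $\Gamma$ forces $|X|\ge k$, and the $4$-circuit from Lemma \ref{2}(ii) forces $k\le 4$. The sufficiency direction, however, rests on a rank formula that is false. You claim $r'(A)=r(A_S\cup X_{A_\Gamma})+1$ whenever $A_\Gamma\neq\emptyset$. This fails already for $A=\{\gamma_1\}$: there $r'(A)=1$ (the $\gamma_1$-column is nonzero), while your formula gives $r(\{x_1\})+1=2$. In general, writing $e$ for the last standard basis vector, one only has $r(A_S\cup X_{A_\Gamma})\le r'(A)\le r(A_S\cup X_{A_\Gamma})+1$, with equality on the right precisely when $e$ lies in the column span of $A$ (which does hold when some pair $x_i,\gamma_i$ lies entirely in $A$ --- so your $|T|=0$ subcase survives --- but not otherwise). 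Consequently the inequality you derive from it, $\lambda(A,B)\ge |T|+1$, is also false: the honest bound obtainable from $r'(A)\ge r(A')$, $r'(B)\ge r(B')$ and submodularity is only $\lambda\ge |T|-1$. A concrete violation: take $M=F_7$, $X=\{1,2,3\}$, $k=3$, $A=\{1,\gamma_2\}$, $B$ the remaining eight elements. Then $|T|=2$, so your bound asserts $\lambda\ge 3$, but $r'(A)=2$ and $r'(B)=4=r'(M^X)$, so $\lambda=2$.

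Since $\lambda\ge|T|+1$ is the workhorse that is supposed to dispose of every subcase with $|T|\ge k-2$ in the case $A_\Gamma\neq\emptyset\neq B_\Gamma$, that entire range ($|T|\ge 1$ for $k=2,3$; $|T|\ge 2$ for $k=4$) is left unproved, and the corrected bound $\lambda\ge|T|-1$ covers only $|T|\ge k$. The remaining sketched subcases for small $|T|$ do not fill this hole. For comparison, the paper avoids the sets $A'$, $B'$, $T$ altogether: in the mixed case it uses only $r'(A)\ge r(A\cap S)+1$ and $r'(B)\ge r(B\cap S)+1$ from Lemma \ref{1}(iii) to exhibit $(A\cap S,\,B\cap S)$ as a $(k-2)$-separation of $M$ unless one of these intersections has at most $k-3$ elements, and then handles the single surviving configuration ($k=4$, $|A\cap S|=1$) by a circuit argument in $A^X$ combined with Lemma \ref{3}. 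To repair your proof you would need either to prove the rank equality under hypotheses that actually hold in each subcase where you invoke it, or to replace the $|T|$-based bound with an argument along the paper's lines.
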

\begin{proof}
Suppose  $|X|\geq k$ and $2\leq k \leq 4.$ We prove that $M^X$ is $k$-connected. The ground set of $M^X$ is $ S \cup \Gamma,$ where $\Gamma$ is disjoint from the ground set $S $ of $M.$  Since $ |\Gamma| = |X|,$ $|\Gamma|\geq k.$ By Lemma \ref{1}(i), $\Gamma$ is independent in $M^X.$  Suppose $r$ and $r'$ denote the rank functions of $M$ and $M^X,$ respectively. Assume that $M^X$ is not $k$-connected.  Then $M^X$  has a $(k-1)$-separation $(A,B).$ Therefore $ A$ and $ B$ are  non-empty disjoint subsets of $S\cup \Gamma$ such that $S\cup \Gamma = A \cup B$ and further,   
\begin{center} min $\{\left| A \right|,\left| B \right| \} \geq k-1~~~~{\rm and}$\\
$ r'(A)+r'(B)-r'(M^{X})\leq k-2.~~~~~\ldots\ldots (1)$
\end{center}
As  $A$ and $B$ are non-empty, each of them intersects $S$ or $ \Gamma$ or both. We consider the three cases depending on whether $A$ intersect only $S$ or only $\Gamma$ or both and obtain a contradiction in each of these cases.
\vskip.15cm\noindent
{\bf Case (i).} $A$ intersects only $\Gamma.$  
\vskip.15cm\noindent
As $A\subset \Gamma,$  $B= S\cup  (\Gamma - A).$ Since  $ \Gamma$ is independent,  $A$ is independent in $M^X.$ Consequently,  $ r'(A) = |A|\geq k-1.$  Suppose  $ A \neq \Gamma.$ Then, by Lemma \ref{1}(iii) and (iv), $r'(B) \geq r(S) + 1= r(M) + 1 = r'(M^X).$ Therefore $r'(B)= r'(M^X).$  Hence  $r'(A)+r'(B)-r'(M^{X})\geq k-1,$ which contradicts (1). Therefore   $A = \Gamma.$ Hence $B = S$ and $ r'(A) = |\Gamma| \geq k.$ By Lemma \ref{1}(ii) and (iv), $r'(B) = r'(S) = r(S) = r(M) = r'(M^X) - 1.$ Therefore $r'(A)+r'(B)-r'(M^{X})\geq k-1,$ which is a contradiction to (1). 
\vskip.15cm\noindent
{\bf Case (ii).} $A$ intersects only $S.$
\vskip.15cm\noindent
As $ A\cap \Gamma = \phi ,$ $A\subset S$ and  $B= (S - A) \cup \Gamma.  $  Therefore, by Lemma \ref{1}(i) and (ii), $ r'(A) = r(A)$ and $r'(B) \geq r'(\Gamma)  = |\Gamma| \geq k.$  Suppose $ |S- A| \leq k-2.$ Then, by Corollary \ref{5}, $r(A)=r(M) $. Consequently, by Lemma \ref{1}(iv),  
\begin{center}
$ r'(A)+r'(B)-r'(M^{X}) = r(A) + r'(B)  - (r(M) +  1) \geq r'(B) - 1 \geq k-1,$
\end{center}
which is a contradiction to (1).
Hence $|S - A| \geq k-1. $ By Lemma \ref{1} (ii) and (iii), $r(S - A) = r'(S - A)\leq r'(B)-1.$ Therefore, by Inequality (1),
\begin{center}
$ r(A)+r(S - A)-r(M)\leq r'(A)+r'(B)-1-r(M^{X})+1 \leq k-2.$
\end{center}
This shows that $A$ and $S-A$ gives a $(k-1)$-separation of $M,$ which is a contradiction to fact that $M$ is $k$-connected.
 \vskip.15cm\noindent
{\bf Case (iii).} $A$ intersects both $S$ and $ \Gamma.$ 
\vskip.15cm\noindent
 Let $ S_1 = A\cap S$ and $\Gamma_{1} = A \cap \Gamma.$ Since $B \neq \phi,$ it  intersects $S$ or $\Gamma.$    If  $ B $ intersects only $S$ or only $\Gamma,$ then we get a contradiction by interchanging roles of $A$ and $B$ in  Case (i) and Case (ii). Therefore  $B$ intersects both $S$ and $\Gamma.$  Let $ S_2 = B\cap S $ and   $ \Gamma_{2}= B\cap \Gamma.$  Then $ S_i \neq \phi$ and $ \Gamma_i \neq \phi$ for $ i =1, 2.$   By  Lemma \ref{1}(ii) and (iii), $r(S_1) = r'(S_1)  \leq r'(A) - 1 $ and $r(S_2) = r'(S_2)   \leq r'(B) - 1 .$ By (1), 
\begin{center}
$r(S_{1})+r(S_{2})-r(M) \leq r'(A)-1+r'(B)-1-r'(M^{X})+1\leq k-3.$
\end{center}
Hence, if $|S_{1}|\geq k-2$ and $|S_{2}|\geq k-2,$  then $(S_{1},S_{2})$ gives a  $(k-2)$-separation of $M,$ a contradiction to fact that $M$ is $k$-connected. Consequently, $|S_{1}|\leq k-3$ or $|S_{2}|\leq k-3.$  

%Suppose $|S_{1}|\leq k-2.$  We claim that $r'(A)\geq k-1.$    Suppose  $|S_1| = k-2.$ Then, by Lemma 2.2, $S_1$ is independent in $M$ and so $r(S_1) = |S_1|.$  By Lemma 2.1(iii), $r'(A) \geq r'(S_1) + 1 = r(S_1) + 1 = |S_1| + 1 = k-1.$ 

Suppose  $|S_1| \leq  k-3.$  As $ k \leq 4$ and $1\leq |S_1|,$ $k=4$ and $|S_1 | = k-3 = 1.$ Thus $A$ contains exactly one element, say $x,$  of $M.$  Further, $|A| \geq k -1 = 4-1 =3.$  We claim that $r'(A)\geq 3.$ Suppose $r'(A)\leq 2.$ Then $A$ contains a circuit $C$ of $M^X$ such that $|C| \leq 3.$ Since $\Gamma$ is independent in $M^X,$ $ C$ is not a subset of $\Gamma.$ Therefore $C$ contains $x$ and $ C - \{x\} \subset A -\{x \} \subset \Gamma.$   In the last row of the matrix $A^X$ which represents the matroid $M^X,$  the columns corresponding to the elements of $\Gamma$ have entries 1 and rest of the entries in that row are zero. As $C$ is a circuit, the sum of the columns of $A^X$ corresponding to the elements of $C$ is zero over GF(2). This implies that $C$ contains at least two elements of $\Gamma.$ Hence $ C =\{x, \gamma_1, \gamma_2 \}$ for some $\gamma_1, \gamma_2\in \Gamma.$ Let $x_1$ and $x_2$ be elements of the matroid $M$ corresponding to $\gamma_1$ and $\gamma_2,$ respectively. By Lemma \ref{2}(ii), $C_1 = \{x_1, x_2, \gamma_1, \gamma_2\}$ is a circuit in $M^X.$ Since $M^X$ is a binary matroid, the symmetric difference $ C \Delta C_1 =\{x, x_1, x_2 \}$ of the circuits $C$ and $C_1$ contains a circuit, say $C_2,$  of $M^X.$ Hence $C_2$ is a circuit in $M^X \setminus \Gamma = M$  such that $|C_2| \leq 3 = 4-1 = k-1,$  a contradiction by Lemma \ref{3}. Hence $r'(A)\geq  3.$ Since $|S_{1}|\leq k-3,$  by Corollary \ref{5}, $ r(S_2 ) = r(S - S_1) = r(M).$ Therefore, by Lemma \ref{1}(iii),  $r'(B)\geq r'(S_2) + 1 = r(S_2) + 1 = r(M)+1 = r'(M^X).$  Therefore $ r'(B) = r'(M^X).$  Hence  $  r'(A)+r'(B)-r'(M^{X}) = r'(A) \geq 3= k-1,$  a contradiction to (1).

Suppose $|S_2| \leq k-3.$ Then, as in the above paragraph, we see that $r'(B) \geq 3=k-1$ and $ r'(A) = r'(M^X)$ and  so $  r'(A)+r'(B)-r'(M^{X}) = r'(B) \geq k-1,$ a contradiction to (1).

Thus we get contradictions in Cases (i), (ii) and (iii). Therefore  $M^{X}$ is $k$-connected.

Conversely, suppose $M^{X}$ is $k$-connected.  The last row of the matrix $A^X,$ which represents $M^X,$ has 1's in the columns corresponding to the set $\Gamma$ and zero elsewhere.  Hence $\Gamma$ contains a cocircuit of $M^X.$  By Lemma \ref{3},  $|\Gamma| \geq k$ and so  $|X| = |\Gamma| \geq k.$ By Lemma \ref{2}(ii), $M^X$ contains a 4-circuit. Therefore, by Lemma \ref{3}, $k\leq 4.$  This completes the proof.
\end{proof}
%\pagebreak
We now give a necessary and sufficient condition to get a connected  matroid $M^X$ from the disconnected matorid $M.$  If  $X$ is  disjoint from a component $D$ of $M,$ then  it follows from Lemma \ref{2}  that $D$ is a component of $M^X$ also.  Therefore to get a  connected matroid $M^X$ from the disconnected matroid $M,$  it is necessary that $X$ intersects  every component of $M.$  In  the following theorem,  we prove that this obvious necessary condition is also suffcient.

\begin{theorem}
Let $M$ be a disconnected binary matroid and let $X$ be an independent set in $M.$  Then $M^X$ is connected if and only if  every component of $M$ intersects $X.$  
\end{theorem}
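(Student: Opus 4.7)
The plan is to use the standard criterion that a matroid is connected iff any two elements lie in a common circuit, and to read off the circuits of $M^X$ directly from Lemma \ref{2}.

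For the (easy) forward direction, which is essentially indicated in the paragraph preceding the theorem, suppose some component $D$ of $M$ is disjoint from $X$. I would argue that every circuit $Z$ of $M^X$ meeting $D$ is contained in $D$: circuits of type (i) of Lemma \ref{2} are circuits of $M$ so sit inside a single component of $M$; type (ii) circuits are contained in $X\cup\Gamma$, hence disjoint from $D$; and a type (iii) circuit $Z=J\cup(D'-X_J)$ meeting $D$ forces $D'$ to meet $D$, hence $D'\subseteq D$, hence $X_J\subseteq D\cap X=\emptyset$, so $J=\emptyset$ and $Z=D'\subseteq D$. Therefore $D$ is a union of components of $M^X$, so $M^X$ is disconnected.

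For the converse, assume every component of $M$ meets $X$. Let $D_1,\dots,D_t$ ($t\ge2$) be the components of $M$ and set $X_i=X\cap D_i\neq\emptyset$; note $|X|\ge t\ge2$. I will show that every pair of elements of $S\cup\Gamma$ lies in a common circuit of $M^X$. First, for any two distinct indices $i,j$ with $x_i,x_j\in X$, Lemma \ref{2}(ii) gives the $4$-circuit $\{x_i,x_j,\gamma_i,\gamma_j\}$ of $M^X$; applying this as $j$ varies shows that all of $X\cup\Gamma$ sits in a single connected component of $M^X$. Second, for any $y\in D_i$, choose $x_i\in X_i$; since $D_i$ is a component of $M$, the restriction $M|D_i$ is connected, so there is a circuit $C$ of $M$ containing both $y$ and $x_i$, and by Lemma \ref{2}(i) this $C$ is also a circuit of $M^X$. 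Thus $y$ is in the same component of $M^X$ as $x_i$, and hence as every element of $X\cup\Gamma$. Since $y$ was an arbitrary element of $S$, the matroid $M^X$ has a single component, i.e.\ it is connected.

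The proof is essentially a circuit bookkeeping argument, so no step is a serious obstacle; the only place requiring mild care is the forward direction, where one must check all three circuit types from Lemma \ref{2} to conclude that a component of $M$ missing $X$ remains isolated in $M^X$. In the converse, the hypothesis $t\ge2$ (which supplies $|X|\ge2$) is what enables the use of a Lemma \ref{2}(ii) $4$-circuit; if $|X|=1$ the argument would collapse, but that case is precluded by the disconnectedness of $M$ together with the component-meeting hypothesis.
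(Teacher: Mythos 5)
Your proof is correct and follows essentially the same route as the paper: both use the $4$-circuits $\{x_i,x_j,\gamma_i,\gamma_j\}$ of Lemma \ref{2}(ii) to link the components of $M$ through $X$ and to absorb $\Gamma$, and both rely on circuits of $M$ surviving into $M^X$ to keep each component internally connected. Your forward direction merely spells out, via the three circuit types of Lemma \ref{2}, the observation the paper states without detail in the paragraph preceding the theorem.
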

\begin{proof} Let $M_1, M_2,\dots, M_r$ be the components of $M.$  Suppose each $M_i$ intersects $X$.
Let $S$ be the ground set of $M.$ Then the ground set of $M^X$ is $S\cup \Gamma,$ where $S\cap \Gamma = \phi.$ Since  each  $M_i$ is connected in $M$ and  $ M^X\setminus \Gamma = M,$ each $M_i$ is connected in $M^X$  too. Therefore each $M_i$ is contained in a component of $M^X.$  We show that all $M_i$ are contained in a single component of $M^X.$  Since $M$ is disconnected, it has at least two components and so $r\geq 2.$ Let $D$ be a component of $M^X$ containing $M_1$ and let $ j\in  \{2, 3, \dots, r\}.$ Suppose $X$ contains an element $ x_1$ of $M_1$ and an element $ x_j$ of $M_j.$ Suppose $\gamma_1$ and $ \gamma_j$ are elements  of $\Gamma$ corresponding to $x_1$ and $x_j,$ respectively. Then, by Lemma 2.2(ii), $ C = \{x_1, x_j, \gamma_1, \gamma_j\}$ is  a 4-circuit in $M^X.$ As $C$ contains an element of the component $D$ of $M^X,$ $C$ is contained in $D.$ Therefore $D$ contains the element $x_j$ of $M_j.$ Consequently, $M_j$ is contained in $D.$ Thus all components of $M$ are contained in $D.$ Therefore $S\subset D.$  Let $\gamma$ be an arbitrary member of $\Gamma$ and let $x $ be the member of $X$ corresponding to $\gamma.$ Then, by Lemma 2.2(ii), $\gamma$ and $x$ belong to a  4-circuit, say $Z,$ of $M^X.$ As $x \in Z \cap D,$ $ Z \subset D$ and so $ \gamma \in D.$ Therefore $ \Gamma \subset D.$  Consequently, $D$ is the only component of $M^X.$ Hence $M^X$ is connected. 

The converse readily follows from the discussion prior to the statement of the theorem. 
%$k \geq 2.$  
\end{proof}
\vskip.2cm
\noindent
{\bf Example 2.8.}
 We illustrate Theorem \ref{6} by using the Fano matroid $F_7.$  The ground set of  $F_7$ is $\{1, 2, 3, 4, 5, 6, 7\}$ and the standard matrix representation of $F_7$ over $GF(2)$ is as follows:
 
$A = $\bordermatrix{ ~&1 &2&3&4&5&6&7\cr
                  ~&1& 0&0&0&1&1&1 \cr
                  ~&0&1& 0&1&0&1&1\cr
                  ~&0&0&1&1&1&0&1
                  }.  
		\vskip.2cm\noindent
Let $X  = \{1, 2\}$ and $ Y = \{1, 2, 3 \}.$ Then  $X$ and $Y$ are independent in $F_7.$ Further, \\

$A^X= $\bordermatrix{ ~&1 &2&3&4&5&6&7&\gamma_1&\gamma_2\cr
                  ~&1& 0&0&0&1&1&1&1&0 \cr
                  ~&0&1& 0&1&0&1&1&0&1\cr
                  ~&0&0&1&1&1&0&1&0&0 \cr
									~&0&0&0&0&0&0&0&1&1
                  } and   $A^Y= $\bordermatrix{ ~&1 &2&3&4&5&6&7&\gamma_1&\gamma_2&\gamma_3\cr
                  ~&1& 0&0&0&1&1&1&1&0&0 \cr
                  ~&0&1& 0&1&0&1&1&0&1&0\cr
                  ~&0&0&1&1&1&0&1&0&0&1 \cr
									~&0&0&0&0&0&0&0&1&1&1
                  }.
								\vskip.2cm\noindent
						Let $F_7^X$  and $F_7^Y$ be  the vector matroids of $A^X$ and $A^Y,$ respectively. It is well known that $F_7$ is 3-connected. One can check that $F_7^Y$ is 3-connected while $F_7^X$ is 2-connected but not 3-connected. 
\vskip.2cm

\noindent
%{\bf Remark 2.7.}  Let $M$ be a $k$-connected binary matroid, where $2 \leq k \leq 4.$ In order to get a new $k$-connected binary matroid from $M,$  by Theorem 1.4, $M$ should satisfy %additional conditions like vertically $(k+1)$-connectedness, girth at least $k+1$ and also a circuit condition to be satisfied by each set containing $k-1$ elements of $M.$ Whereas, to get a %new $k$-connected binary matroid from $M,$ Theorem 1.6 does not assume any of such additional conditions.


\begin{thebibliography}{99}

\bibitem{a1} {H. Azanchiler, "$\Gamma$-extension of binary matroids",  ISRN Discrete Mathematics {\bf 2011},  Article 629707 (8 pages) (2011).}
\bibitem{a2} {H. Azanchiler, "On extension of graphic matroids",  Lobachevskii J. Math. {\bf 36}, 38-47 (2015).}
\bibitem{b} { Y. M. Borse, "A note on $n$-connected splitting-off matroids",  Ars Combin. {\bf 128},  279-286 (2016).}
\bibitem{bd}	{Y. M. Borse and  S. B. Dhotre, "On connected splitting matroids",  Southeast Asian Bull. Math. {\bf 36}(1),  17-21 (2012).}
\bibitem{bm1}{Y. M. Borse and  G. Mundhe, "On $n$-connected splitting matroids",  AKCE Int. J. Graphs Comb. (2017)(in press), doi:10.1016/j.akcej.2017.12.001.}

\bibitem{bm2}{Y. M. Borse and  G. Mundhe, "Graphic and cographic  $\Gamma$-extensions of binary matroids,"  Discuss. Math. Graph Theory {\bf 38}, 889-898 (2018).}

%\bibitem{bm2}{Y. M. Borse and  G. Mundhe, "Graphic and cographic  $\Gamma$-extensions of binary matroids,  Discuss. Math. Graph Theory (2018)(in press), doi:10.7151/dmgt.2043.}

\bibitem{f} {H. Fleischner, {\it Eulerian Graphs and
             Related Topics Part 1, Vol. 1}  (North Holland, Amsterdam, 1990).}

%\bibitem{m} {Mills A., On the cocircuits of a splitting             matroid, {\it Ars Combin.} 89 (2008), 243-253.}
\bibitem{msd}	{ P. P. Malavadkar, M. M. Shikare and S. B. Dhotre, "A characterization of $n$-connected splitting matroids",  Asian-European J. Combin. {\bf 7}(4), Article  14500600 (7 pages) (2014). }

 \bibitem{ox} {J. G. Oxley, {\it Matroid Theory} (Oxford University Press, Oxford, 1992).}
 
\bibitem{rsw} { T. T. Raghunathan, M. M. Shikare   and  B. N. Waphare, "Splitting in a binary matroid",
               Discrete Math. {\bf 184}, 267-271 (1998).}
            
							
\bibitem{saw}  {M. M. Shikare, G. Azadi and B. N. Waphare, "Generalized splitting operation for binary matroids and its applications",  J. Indian Math. Soc. New Ser. {\bf 78} No.1-4,  145-154 (2011).}

\bibitem{sl} {P. J. Slater, "A Classification of 4-connected  graphs",  J. Combin. Theory Series B {\bf 17},  281-298 (1974).}


\end{thebibliography}
\end{document}